\newtheorem{theorem}{Theorem}
\newtheorem{lemma}{Lemma}
\newtheorem{remark}{Remark}
\newtheorem{corollary}{Corollary}
\title{Finite-sample-based Spectral Radius Estimation and Stabilizability Test for Networked Control Systems}
\author{Liang Xu, Baiwei Guo and Giancarlo Ferrari-Trecate
  \thanks{This work was supported by the National Centre of Competence in Research (NCCR) in Dependable and Ubiquitous Automation (grant agreement 51NF40 180545).}
  \thanks{Authors are with the Institute of Mechanical Engineering (IGM),
  EPFL, Switzerland. Email: {\tt\small  \{liang.xu, baiwei.guo, giancarlo.ferraritrecate\}@epfl.ch}}}
\begin{document}

\maketitle
\thispagestyle{empty}
\pagestyle{empty}

\begin{abstract}
  In the analysis and control of discrete-time linear time-invariant systems, the spectral radius of the system state matrix plays an essential role.
Usually, it is assumed that system matrices are known, from which the spectral radius can be directly computed.
Instead, we consider the setting where the system is affected by process noise, and one has only finitely many samples of system input and state measurements.
We provide two methods for estimating the spectral radius and derive error bounds that hold with high probability.
Moreover, we show how to use the derived results to test stabilizability for networked control systems (NCSs) with lossy channels when only finitely many samples of the system input, state, and packet drop sequence are available.
\end{abstract}

\section{Introduction}
Due to the development of sensing, communication, and computation techniques, data availability is steadily increasing, which has motivated a surge of research in directly using data for control system analysis and design.
For example, without explicit system identification, finite-sample-based criteria are proposed to evaluate the controllability~\cite{RN11260, RN10935}, passivity/dissipativity~\cite{RN11100,RN11098,RN11099,RN11095} and $L_2$ gain~\cite{RN11096} of the underlying dynamical systems.

This paper considers using data to estimate the spectral radius of the system state matrix and check the stabilizability of networked control systems (NCSs).
The knowledge of the spectral radius of the system state matrix is fundamental in linear system analysis and control design.
For example, in networked control over channels affected by packet drops, the spectral radius of the open-loop system state matrix must satisfy specific conditions depending on the packet reception rate to ensure stabilizability~\cite{RN373, RN2897}.
In various applications, the system parameters are reconstructed from a finite amount of input-output or input-state data.
Estimates of the spectral radius derived from noisy data are subject to errors and thus may lead to wrong conclusions about critical properties of the system.
For example, in networked control over channels affected by packet drops, if the stabilizability margin is small, a significant error in the spectral radius may result in an erroneous conclusion about stabilizability.

The work~\cite{RN11554} and~\cite{RN11389} show how to determine stability and stabilizability properties of linear systems from a finite amount of data.
The authors of~\cite{RN11554} aim to assess the stability of an unknown deterministic switched linear system from a finite number of trajectories.
They provide a chance-constrained optimization-based approach and give probabilistic stability guarantees.
For networked control over lossy channels,~\cite{RN11389} shows how to use finitely many samples of the packet drop sequence to estimate the channel statistics and further determine whether stabilizability conditions for NCSs hold.
In this paper, we address the same problem as in~\cite{RN11389}  but in the more challenging scenario where both the channel statistics and the system matrices are unknown.

We estimate the spectral radius from finitely many input-state data, propose two estimation algorithms, and derive small error bounds which hold with high probability by leveraging recent results on non-asymptotic least-square system identification~\cite{RN10374}~\cite{RN10762} and matrix eigenvalue perturbation techniques~\cite{RN11408}.
The derived sample complexity bound scales linearly with the system dimension.
Moreover, based on the above methods, we propose an algorithm to determine whether the stabilizability condition for networked control over lossy channels holds from a finite amount of system input, state, and packet drop samples.
The proposed algorithm uses the estimated spectral radius and the packet reception rate to check whether a suitably defined stabilizability inequality holds.
We show that the sample complexity for testing stabilizability scales in a way inversely proportional to the square of the stabilizability margin.

This paper is organized as follows.
In Section~\ref{sec.ProblemFormulation}, we give the problem formulation.
The finite-sample estimation of the spectral radius is studied in Section~\ref{sec.SepctralRadiusEstimation}.
The data-based NCS stabilizability test is provided in Section~\ref{sec.StabilityTest}.
Simulations are given in Section~\ref{sec.Simulation} and conclusions are provided in Section~\ref{sec.Conclusion}.

\textbf{Notation}:
$\rho(\cdot)$ denotes the spectral radius of a matrix.
$\lambda_{\max}(\cdot) (\lambda_{\min}(\cdot))$ denotes the maximal (minimal) eigenvalue of a positive semi-definite matrix.
$\|A\|$ represents the spectral norm of the matrix $A$.
$\log(x)$ denotes the logarithm of $x$ in base $e$.

\section{Problem Formulation\label{sec.ProblemFormulation}}

Consider the LTI system
\begin{align}
  \label{eq.GaussianProcess}
  x_{t+1}=Ax_t+Bu_t+w_t,
\end{align}
where $x_t \in \mathbb{R}^{n}$, $u_t\in \mathbb{R}^{p}$, $w_t \sim \mathcal{N}(0, \sigma_w^2 I)$ and the process noises $\{w_0, w_1, \ldots\}$ are independent.
We assume that matrices $A$ and $B$ are unknown and address the problem of estimating the spectral radius $\rho(A)$  from collected input and state measurements and of providing probabilistic bounds for the estimation error.

Moreover, we consider the networked control setting where sensors measure the system state $x_t$ and transmit it to the controller through a wireless communication channel,  suffering from Bernoulli packet drops~\cite{RN373} with packet reception rate $q\in (0,1)$.
The packet drop event is denoted by a binary variable $\gamma_t$, which is $0$ if the packet is lost and $1$ otherwise.
As shown in~\cite{RN373}, there exist an estimator and a controller such that closed-loop control system is mean square stable, only if the following condition holds
\begin{align}
  \label{eq.StabilityCondition}
  q> 1-\frac{1}{\rho(A)^2}.
\end{align}
We assume that the precise value of  $q$ and  $\rho(A)$ are unknown.
We aim to provide a method of using the collected input/state measurements and communication channel samples to determine whether the stabilizability condition~\eqref{eq.StabilityCondition} holds.
In the presence of a finite amount of data, due to estimation errors, we can never correctly assert whether the equality    $q=1-1/{\rho(A)^2}$ holds.
Therefore, throughout this paper,  we assume that  $q \neq 1-1/{\rho(A)^2}$.

\section{Finite-Sample-Based Spectral Radius Estimation\label{sec.SepctralRadiusEstimation}}
This section provides two algorithms for estimating the spectral radius from a finite amount of input-state samples of~\eqref{eq.GaussianProcess} and derive estimation error bounds.
The first algorithm utilizes all data samples to generate the estimate, and the associated performance bound is data-dependent.
The second method requires multiple system trajectories and uses the last input and the last two state measurements in each trajectory to generate the estimate, which allows deriving a data-independent estimation error bound.

The first algorithm is more data-efficient than the second one, and the associated bounds can be tighter.
However, the sample complexity analysis is prohibitive. 
In contrast, the error bound of the second algorithm is a function of system parameters and the number of trajectories $N$ only.
The explicit dependence on $N$ allows deriving sample complexity results, i.e., to determine how many system trajectories are needed to achieve the desired estimation accuracy.

\subsection{Data-Dependent Estimation Error Bounds}
We consider the estimation procedure in Algorithm~\ref{alg.DataDependentSpectralRadiusEstimation}.
\begin{algorithm}[htbp]
  \caption{Finite-sample-based Spectral Radius Estimation}
  \label{alg.DataDependentSpectralRadiusEstimation}
  \begin{algorithmic}[1]
    \Require collect independent data $\{u^{(l)}, x^{(l)}, x_+^{(l)}\}$ for $l=1, \ldots, M$, where $x_+^{(l)}=Ax^{(l)}+Bu^{(l)}+w^{(l)}$
\State use least-squares to estimate the system state matrix $(A, B)$, i.e.,
\begin{align}\label{eq.DataDependentLeastSquare}
  & ( \hat{A}, \hat{B})=\arg \min_{A, B} \sum_{l=1}^M
     \| x_+^{(l)}-Ax^{(l)}-Bu^{(l)}\|^2\\
  &=\left(\sum_{l=1}^M x_+^{(l)}
    \begin{bmatrix}
      x^{(l)}\\
          u^{(l)}
    \end{bmatrix}^\top\right)  \left(\sum_{l=1}^M \begin{bmatrix}
      x^{(l)}\\
          u^{(l)}
    \end{bmatrix}\begin{bmatrix}
      x^{(l)}\\
          u^{(l)}
    \end{bmatrix}^\top\right)^{-1} \nonumber
\end{align}
\State  calculate $\rho(\hat{A})$ and return $\rho(\hat{A})$ as an estimate of $\rho(A)$.
  \end{algorithmic}
\end{algorithm}

  In Algorithm~\ref{alg.DataDependentSpectralRadiusEstimation}, independent data tuple $\{u^{(l)}, x^{(l)}, x_+^{(l)}\}$ means the random samples $w^{(l)}=x_+^{(l)}-Ax^{(l)}-Bu^{(l)}$  are independent.
  Independent data tuples can be constructed from a trajectory of~\eqref{eq.GaussianProcess} by using random inputs since the process samples at different times are independent.
  Besides, even though in~\eqref{eq.DataDependentLeastSquare} we also obtain an estimate $\hat{B}$ for the input matrix, it will not be used in this paper.


The next lemma provides data-dependent error bounds for the least-square estimate~\eqref{eq.DataDependentLeastSquare} in Algorithm~\ref{alg.DataDependentSpectralRadiusEstimation}.
\begin{lemma}[Proposition 3 of~\cite{RN10374}]\label{lem:DataDependentLST}
  Assume that $M \geq n+p$, then, with probability at least $1-\delta$,   the least-square estimate in~\eqref{eq.DataDependentLeastSquare} verifies
\begin{multline*}
\left[\begin{array}{l}
(\widehat{A}-A)^{\top} \\
(\widehat{B}-B)^{\top}
      \end{array}\right]
    \begin{bmatrix}
   (\widehat{A}-A) & (\widehat{B}-B)
      \end{bmatrix}
    \preceq C(n, p, \delta)\Phi^{-1}
 \end{multline*}
 where $\Phi=
\sum_{l=1}^M\left[ \begin{smallmatrix}
      x^{(l)}\\
          u^{(l)}
    \end{smallmatrix}\right] \left[\begin{smallmatrix}
      x^{(l)}\\
          u^{(l)}
        \end{smallmatrix}\right]^\top $, $C(n, p, \delta)=\sigma_{w}^{2}(\sqrt{n+p}+\sqrt{n}+\sqrt{2 \log (1 / \delta)})^{2}$.
      If $\Phi$ has a zero eigenvalue, we set $C(n, p, \delta)\Phi^{-1}=+\infty$. 
\end{lemma}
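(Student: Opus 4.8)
The plan is to reduce the Loewner matrix inequality to a tail bound on the operator norm of a standard Gaussian matrix, obtained after conditioning on the regressors. Stacking the data, write $\Theta = [A\ B]$ and $z^{(l)} = [x^{(l)\top}\ u^{(l)\top}]^\top$, so that $x_+^{(l)} = \Theta z^{(l)} + w^{(l)}$. First I would derive the closed form of the least-squares error: substituting into \eqref{eq.DataDependentLeastSquare} and using $\Phi=\sum_l z^{(l)}z^{(l)\top}$ gives $\widehat\Theta-\Theta = E\Phi^{-1}$ with $E=\sum_l w^{(l)}z^{(l)\top}$, whenever $\Phi$ is invertible. Consequently the left-hand side of the claim equals $(\widehat\Theta-\Theta)^\top(\widehat\Theta-\Theta)=\Phi^{-1}E^\top E\Phi^{-1}$, and the asserted inequality $\Phi^{-1}E^\top E\Phi^{-1}\preceq C(n,p,\delta)\Phi^{-1}$ is, after conjugating by $\Phi^{1/2}$ (a congruence, hence order-preserving), equivalent to the scalar statement $\|E\Phi^{-1/2}\|^2\le C(n,p,\delta)$.

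Next I would condition on the regressors. Collect the noises and regressors into $W\in\mathbb{R}^{M\times n}$ (rows $w^{(l)\top}$) and $Z\in\mathbb{R}^{M\times(n+p)}$ (rows $z^{(l)\top}$), so that $E=W^\top Z$ and $\Phi=Z^\top Z$. Then $E\Phi^{-1/2}=W^\top P$ with $P=Z\Phi^{-1/2}$, and $P$ has orthonormal columns since $P^\top P=\Phi^{-1/2}\Phi\Phi^{-1/2}=I_{n+p}$. The decisive observation is that, because the data tuples are independent, conditioning on $Z$ leaves the entries of $W$ i.i.d.\ $\mathcal{N}(0,\sigma_w^2)$; and since $P$ has orthonormal columns, $\sigma_w^{-1}W^\top P$ is then a matrix with i.i.d.\ standard Gaussian entries (orthonormality annihilates the cross-covariances between its columns). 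Hence $\|E\Phi^{-1/2}\|=\sigma_w\|G\|$ for a standard Gaussian $G\in\mathbb{R}^{n\times(n+p)}$.

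It then remains to control the operator norm of $G$. I would invoke the standard non-asymptotic bound for Gaussian matrices: the estimate $\mathbb{E}\|G\|\le \sqrt{n}+\sqrt{n+p}$ for the largest singular value, combined with Gaussian Lipschitz concentration of the $1$-Lipschitz map $G\mapsto\|G\|$, which yields $\|G\|\le \sqrt{n}+\sqrt{n+p}+t$ with probability at least $1-e^{-t^2/2}$. Choosing $t=\sqrt{2\log(1/\delta)}$ gives $\|E\Phi^{-1/2}\|^2\le \sigma_w^2(\sqrt{n+p}+\sqrt{n}+\sqrt{2\log(1/\delta)})^2=C(n,p,\delta)$ with conditional probability at least $1-\delta$. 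Because this bound is uniform over $Z$, integrating over the distribution of the regressors via the tower property transfers it to the unconditional statement, and the degenerate event that $\Phi$ is singular is absorbed by the convention $C(n,p,\delta)\Phi^{-1}=+\infty$, for which the inequality holds vacuously. The main obstacle is precisely this concentration step: producing the operator-norm tail of $G$ with exactly the constants $\sqrt{n}+\sqrt{n+p}$ — which is where the specific form of $C(n,p,\delta)$ comes from — and carrying out the conditioning and rotational-invariance argument cleanly so that the resulting tail is independent of the realized regressors.
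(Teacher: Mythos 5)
The paper never proves this lemma itself---it is imported verbatim as Proposition~3 of~\cite{RN10374}---so the relevant comparison is with that source's argument, and your proof reproduces it essentially exactly: the reduction of the Loewner inequality to the scalar bound $\|E\Phi^{-1/2}\|^2\le C(n,p,\delta)$ via congruence, the identity $E\Phi^{-1/2}=W^\top P$ with $P=Z(Z^\top Z)^{-1/2}$ orthonormal so that, conditionally on the regressors, $W^\top P$ is an $n\times(n+p)$ matrix with i.i.d.\ $\mathcal{N}(0,\sigma_w^2)$ entries, and finally the Gaussian operator-norm bound $\|G\|\le\sqrt{n}+\sqrt{n+p}+\sqrt{2\log(1/\delta)}$ from Gordon's expectation estimate plus Lipschitz concentration, integrated out by the tower property. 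Your proof is correct, and the convention for singular $\Phi$ is handled appropriately. The one point to be explicit about is the conditioning step: it requires the noise matrix $W$ to be independent of the \emph{entire} regressor matrix $Z$ (as in the independent-rollout setting of~\cite{RN10374}), which is strictly stronger than the paper's stated reading of ``independent data'' as mere mutual independence of the $w^{(l)}$; for tuples extracted from a single trajectory, where $z^{(l+1)}$ depends on $w^{(l)}$, this conditional i.i.d.\ Gaussian structure fails and the stated constants would not follow from this argument---a subtlety of the paper's setup rather than a flaw in your reasoning, but worth flagging where you assert that tuple independence alone suffices.
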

From Lemma~\ref{lem:DataDependentLST}, we can obtain an error bound for estimating $A$. Define $E=[I, 0]$. Then, we have
$
(\widehat{A}-A)^{\top} (\widehat{A}-A)
    \preceq C(n, p, \delta)E \Phi^{-1} E^{\top}
 $,
which gives
\begin{align}
\|\widehat{A}-A\| 
  \le C(n, p, \delta)^{\frac{1}{2}} \lambda_{\max}^{\frac{1}{2}} \left( E \Phi^{-1} E^{\top}\right)
   := f_1(\delta).\label{eq:f1Def}
 \end{align}

The spectral radius estimation error bound of Algorithm~\ref{alg.DataDependentSpectralRadiusEstimation} will be based on~\eqref{eq:f1Def} and the following eigenvalue perturbation result.
 \begin{lemma}[Theorem VIII.1.1 in~\cite{RN11408}]\label{lem.SpectralVariationBounds}
Suppose $P$ and $Q$ are two $n \times n$ matrices with eigenvalues $\alpha_{1}, \ldots, \alpha_{n},$ and $\beta_{1}, \ldots, \beta_{n},$ respectively, then we have
\begin{align}
  \label{eq:1}
\max _{j} \min _{i}\left|\alpha_{i}-\beta_{j}\right|  \leq  (\|P\|+\|Q\|)^{1-\frac{1}{n}}\|P-Q\|^{\frac{1}{n}}.
\end{align}
Moreover, when $P=Q$ or  $P=-Q=I$, the equality in~\eqref{eq:1} holds.
\end{lemma}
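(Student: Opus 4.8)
The plan is to establish the bound one eigenvalue of $Q$ at a time and then take the maximum over $j$. I would fix an eigenvalue $\beta_j$ of $Q$ with a unit eigenvector $v$ and set $d_j = \min_i |\alpha_i - \beta_j|$. If $\beta_j$ happens to be an eigenvalue of $P$, then $d_j = 0$ and the estimate is trivial, so I assume $P - \beta_j I$ is invertible. From $Q v = \beta_j v$ one has $(P - \beta_j I) v = (P - Q) v$, and since the smallest singular value obeys $\sigma_{\min}(P - \beta_j I) = \min_{\|x\| = 1} \|(P - \beta_j I) x\|$, this yields $\sigma_{\min}(P - \beta_j I) \le \|(P-Q) v\| \le \|P - Q\|$.

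The key step is to bound $\sigma_{\min}$ from below by the minimum eigenvalue gap. Writing $M = P - \beta_j I$ with singular values $\sigma_1 \ge \cdots \ge \sigma_n$, I would use $|\det M| = \prod_i \sigma_i$ together with $\sigma_i \le \|M\|$ to obtain $\sigma_{\min}(M) = |\det M| \big/ \prod_{i < n} \sigma_i \ge |\det M| / \|M\|^{n-1}$. Because $\det(P - \beta_j I) = \prod_i (\alpha_i - \beta_j)$ and every factor satisfies $|\alpha_i - \beta_j| \ge d_j$ by the definition of $d_j$, it follows that $|\det M| \ge d_j^n$. Chaining these inequalities gives $d_j^n \le \|M\|^{n-1} \|P - Q\|$.

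To finish, I would bound $\|M\| = \|P - \beta_j I\| \le \|P\| + |\beta_j| \le \|P\| + \|Q\|$, where $|\beta_j| \le \|Q\|$ because $\beta_j$ is an eigenvalue of $Q$. Substituting and taking $n$-th roots produces $d_j \le (\|P\| + \|Q\|)^{1 - 1/n} \|P - Q\|^{1/n}$, and maximizing over $j$ gives~\eqref{eq:1}. The equality assertions I would check directly: for $P = Q$ both sides are zero, while for $P = -Q = I$ the left side equals $|1 - (-1)| = 2$ and the right side equals $(1 + 1)^{1 - 1/n} \cdot 2^{1/n} = 2$.

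The hard part will be the determinant-to-singular-value estimate $\sigma_{\min}(M) \ge |\det M| / \|M\|^{n-1}$: this is precisely what converts a statement about the product of all the gaps $|\alpha_i - \beta_j|$ into the single smallest gap $d_j$, and only its combination with $|\det M| \ge d_j^n$ --- valid exactly because $d_j$ is the minimum --- produces the characteristic $1/n$ exponent. By comparison, the norm bound $\|M\| \le \|P\| + \|Q\|$ and the reduction handling $d_j = 0$ are routine.
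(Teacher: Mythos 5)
Your proof is correct: the paper itself gives no proof of this lemma (it is quoted verbatim from Theorem VIII.1.1 of the cited reference), and your argument reconstructs exactly the standard proof of that theorem --- fix a unit eigenvector $v$ of $Q$ for $\beta_j$, note $(P-\beta_j I)v=(P-Q)v$ so the smallest singular value of $P-\beta_j I$ is at most $\|P-Q\|$, bound the remaining singular values by $\|P\|+\|Q\|$, and compare $|\det(P-\beta_j I)|=\prod_i|\alpha_i-\beta_j|\ge d_j^n$ with the product of singular values. The edge cases ($d_j=0$ and the two equality instances) are also handled correctly, so nothing is missing.
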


\begin{theorem}\label{thm.SpectralDependentBounds}
  Consider Algorithm~\ref{alg.DataDependentSpectralRadiusEstimation}, fix a failure probability $\delta \in(0,1),$ and assume that
 $M \geq n+p$.
  Then, it holds with probability at least $1-\delta$ that
\begin{align}
  |\rho(A)-\rho(\hat{A})|& \le    {\left(2\|\hat{A}\|+ f_1(\delta)\right)}^{1-\frac{1}{n}}{ \left(f_1(\delta)\right)}^{\frac{1}{n}}\nonumber\\
 & := f_2(\delta), \label{eq.f4Def}
\end{align}
where $f_1(\delta)$ is defined in~\eqref{eq:f1Def}.
\end{theorem}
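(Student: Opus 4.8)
The plan is to chain together the two lemmas already available: first invoke the high-probability estimation error bound $\|\widehat{A}-A\|\le f_1(\delta)$ from~\eqref{eq:f1Def}, and then use the eigenvalue perturbation inequality of Lemma~\ref{lem.SpectralVariationBounds} to convert this matrix-norm error into a bound on the difference of spectral radii. Throughout, I would condition on the event $\mathcal{E}=\{\|\widehat{A}-A\|\le f_1(\delta)\}$, which by Lemma~\ref{lem:DataDependentLST} and the derivation preceding~\eqref{eq:f1Def} occurs with probability at least $1-\delta$ whenever $M\ge n+p$; all remaining inequalities are deterministic consequences valid on $\mathcal{E}$.

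The core step is translating the one-sided spectral variation in Lemma~\ref{lem.SpectralVariationBounds} into a symmetric bound on $|\rho(A)-\rho(\widehat{A})|$. Let $\alpha_1,\ldots,\alpha_n$ and $\beta_1,\ldots,\beta_n$ denote the eigenvalues of $A$ and $\widehat{A}$, so that $\rho(A)=\max_i|\alpha_i|$ and $\rho(\widehat{A})=\max_j|\beta_j|$. I would apply Lemma~\ref{lem.SpectralVariationBounds} twice. Taking $P=A$ and $Q=\widehat{A}$, the lemma guarantees that for the eigenvalue $\beta_{j^\star}$ attaining $\rho(\widehat{A})$ there is some eigenvalue $\alpha_i$ of $A$ with $|\alpha_i-\beta_{j^\star}|\le (\|A\|+\|\widehat{A}\|)^{1-1/n}\|A-\widehat{A}\|^{1/n}$; the reverse triangle inequality then yields $\rho(\widehat{A})\le\rho(A)+(\|A\|+\|\widehat{A}\|)^{1-1/n}\|A-\widehat{A}\|^{1/n}$. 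Swapping the roles to $P=\widehat{A}$, $Q=A$ and repeating the argument for the eigenvalue attaining $\rho(A)$ gives the matching reverse inequality. Since $\|P\|+\|Q\|$ and $\|P-Q\|$ are unchanged under the swap, combining the two directions produces
\[
|\rho(A)-\rho(\widehat{A})|\le\bigl(\|A\|+\|\widehat{A}\|\bigr)^{1-\frac1n}\,\|A-\widehat{A}\|^{\frac1n}.
\]

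To finish, I would replace the quantities involving the unknown matrix $A$ by computable bounds. On $\mathcal{E}$ we have $\|A-\widehat{A}\|\le f_1(\delta)$, and the triangle inequality gives $\|A\|\le\|\widehat{A}\|+f_1(\delta)$, so that $\|A\|+\|\widehat{A}\|\le 2\|\widehat{A}\|+f_1(\delta)$. Because both exponents $1-\frac1n$ and $\frac1n$ are nonnegative and $x\mapsto x^s$ is nondecreasing on $[0,\infty)$ for every $s\ge0$, monotonicity upgrades the previous display to the claimed bound $\bigl(2\|\widehat{A}\|+f_1(\delta)\bigr)^{1-1/n}\bigl(f_1(\delta)\bigr)^{1/n}=f_2(\delta)$, and the whole chain holds on the event $\mathcal{E}$ of probability at least $1-\delta$.

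I expect the main obstacle to be the middle step rather than the bookkeeping at the end: Lemma~\ref{lem.SpectralVariationBounds} only controls the \emph{one-sided} optimal matching distance, namely the distance from each eigenvalue of $Q$ to the spectrum of $P$, whereas the spectral radius is a two-sided extremal quantity. The resolution is precisely to invoke the lemma in both orderings and feed the maximum-modulus eigenvalue of each matrix into the reverse triangle inequality, being careful that the matched eigenvalue index is chosen by the $\min$ in~\eqref{eq:1} separately for each direction; once this is handled, the passage to $f_2(\delta)$ is routine monotonicity.
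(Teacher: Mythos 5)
Your proposal is correct and follows essentially the same route as the paper's own proof: apply Lemma~\ref{lem.SpectralVariationBounds} in both orderings of $(A,\hat{A})$ to get the symmetric bound $|\rho(A)-\rho(\hat{A})|\le(\|A\|+\|\hat{A}\|)^{1-\frac{1}{n}}\|A-\hat{A}\|^{\frac{1}{n}}$, then use the triangle inequality $\|A\|\le\|\hat{A}\|+\|A-\hat{A}\|$ and the high-probability bound $\|A-\hat{A}\|\le f_1(\delta)$ from Lemma~\ref{lem:DataDependentLST}. Your explicit conditioning on the event $\mathcal{E}$ and the monotonicity remark are slightly more careful bookkeeping than the paper's write-up, but the argument is the same.
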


\begin{proof}
Suppose the eigenvalues of $A$ and $\hat{A}$ are $\alpha_{1}, \ldots, \alpha_{n},$ and $\beta_{1}, \ldots, \beta_{n}$, respectively.
Without loss of generality, assume $\beta_{j}$ are arranged in magnitude decreasing order, i.e.
$  \rho(\hat{A})=|\beta_1|\ge |\beta_2|\ge \ldots \ge |\beta_n|.$
Then from Lemma~\ref{lem.SpectralVariationBounds},  for all $j$, we have
\begin{align*}
  |\alpha_{\sigma(j)}-\beta_{j}|    \leq   (\|A\|+\|\hat{A}\|)^{1-\frac{1}{n}}\|A-\hat{A}\|^{\frac{1}{n}},
\end{align*}
where $\sigma(j)=\arg \min_i |\alpha_i-\beta_j|$.
In view of the above inequality we have
\begin{align*}
|\beta_j| \le   (\|A\|+\|\hat{A}\|)^{1-\frac{1}{n}}\|A-\hat{A}\|^{\frac{1}{n}}+|\alpha_{\sigma(j)}|.
\end{align*}
Let $j=1$, we have
\begin{align*}
  \rho(\hat{A})&=|\beta_1|\le   (\|A\|+\|\hat{A}\|)^{1-\frac{1}{n}}\|A-\hat{A}\|^{\frac{1}{n}} +|\alpha_{\sigma(1)}| \\
  &\le   (\|A\|+\|\hat{A}\|)^{1-\frac{1}{n}}\|A-\hat{A}\|^{\frac{1}{n}}+\rho(A).
\end{align*}
Since in Lemma~\ref{lem.SpectralVariationBounds}, the matrices $P$ and $Q$ can be exchanged, following similar derivations, we can also obtain that
\begin{align*}
  \rho(A)\le   (\|A\|+\|\hat{A}\|)^{1-\frac{1}{n}}\|A-\hat{A}\|^{\frac{1}{n}}+\rho(\hat{A}).
\end{align*}
Therefore, we conclude that
\begin{align}\label{eq.ImmediateErrorBounds}
  |\rho(A)-\rho(\hat{A})| \le   (\|A\|+\|\hat{A}\|)^{1-\frac{1}{n}}\|A-\hat{A}\|^{\frac{1}{n}}.
\end{align}

Since
\begin{align}\label{eq.hatAscaling}
  \|A\|=\|\hat{A}+A-\hat{A}\|\le \|\hat{A}\|+\|A-\hat{A}\|,
\end{align}
we have
\begin{align*}
  |\rho(A)-\rho(\hat{A})| \le    (2\|\hat{A}\|+\|A-\hat{A}\|)^{1-\frac{1}{n}} \|A-\hat{A}\|^{\frac{1}{n}}.
\end{align*}
Further, from the bound on the error norm in~\eqref{eq:f1Def} and Lemma~\ref{lem:DataDependentLST}, we can obtain the result.
\end{proof}

Theorem~\ref{thm.SpectralDependentBounds} is a non-trivial extension of existing finite-sample-based system identification results~\cite{RN10374} and shows how system parameters affect the estimation performance.

\subsection{Data-Independent Estimation Error Bounds}\label{sec.DataIndependentSpectralEstimate}
Suppose we collect the following samples of multiple trajectories from system~\eqref{eq.GaussianProcess} initialized at $x_0=0$ using i.i.d.\ inputs $u_t \sim \mathcal{N}(0, \sigma_u^2I)$,
\begin{align*}
  \{u_0^{(i)},x_0^{(i)}, u_1^{(i)},x_1^{(i)}, \ldots, u_{T}^{(i)}, x_T^{(i)}, x_{T+1}^{(i)}\},
\end{align*}
where $i=1, \ldots, N$ denotes the index of $N$ independent experiments.
To estimate the spectral radius, we propose Algorithm~\ref{alg.DataIndependentSpectralRadiusEstimation}, which is based on the least-square identification algorithms analyzed in~\cite{RN10374}.

\begin{algorithm}[htbp]
  \caption{Finite-sample-based Spectral Radius Estimation Using Multiple Trajectories}
  \label{alg.DataIndependentSpectralRadiusEstimation}
  \begin{algorithmic}[1]
    \Require collected data $\{u_T^{(i)}, x_T^{(i)}, x_{T+1}^{(i)}\}$ for $i=1, \ldots, N$
\State use least-squares to estimate the system state matrix $(A, B)$, i.e.,
\begin{align}\label{eq.MultiTrajLeastSquare}
& ( \hat{A}, \hat{B})=\arg \min_{A, B} \sum_{i=1}^N\| x_{T+1}^{(i)}-Ax_T^{(i)}-Bu_T^{(i)}\|^2\\
  &=\left(\sum_{i=1}^N x_{T+1}^{(i)}
    \begin{bmatrix}
      x_T^{(i)}\\
          u_T^{(i)}
    \end{bmatrix}^\top\right)  \left(\sum_{i=1}^N \begin{bmatrix}
      x_T^{(i)}\\
          u_T^{(i)}
    \end{bmatrix}\begin{bmatrix}
      x_T^{(i)}\\
          u_T^{(i)}
    \end{bmatrix}^\top\right)^{-1} \nonumber
\end{align}
\State  calculate $\rho(\hat{A})$ and return $\rho(\hat{A})$ as an estimate of $\rho(A)$.
  \end{algorithmic}
\end{algorithm}

The assumption that the system is initialized at $x_0=0$ is made to simplify the finite-sample analysis of the least-square estimation~\eqref{eq.MultiTrajLeastSquare}, see~\cite{RN10374}.
This assumption can be relaxed, and we can follow similar proof steps as in~\cite{RN10374} to conduct finite-sample analysis  of~\eqref{eq.MultiTrajLeastSquare} for data collected from $x_0\neq 0$.
One can also use all measurements from each trajectory to obtain an estimate of $\rho(A)$.
  However, the sample complexity analysis of least squares for this approach relies on the knowledge of the eigenvalue distribution of the state matrix~\cite{RN10762, RN10374, RN11035, RN11104}, and therefore, cannot be applied when this prior information is not available.
  Moreover, as shown in~\cite{RN11104},  least-square algorithms using all measurements from a single trajectory are inconsistent for specific system matrices.
  Even though Algorithm~\ref{alg.DataIndependentSpectralRadiusEstimation} only uses the last input and the last two state measurements in each trajectory, it can still provide a tight estimate, compared with Algorithm~\ref{alg.DataDependentSpectralRadiusEstimation}, which can be observed from simulations in Section~\ref{sec.Simulation}.  
  This is because the state measurements in a trajectory are statistically correlated, and the last two state measurements also contain information about previous state measurements.
  A corroborating fact is that the error bound~\eqref{eq:f3Df} (below in Lemma~\ref{lem.LeastSquareIdentification}) for estimating system matrices using last input and last two state measurements decreases as the total trajectory length $T$ increases, which means that a longer trajectory will result in a smaller estimation error even if not all the measurements are used.

  In the sequel, we analyze the performance of Algorithm~\ref{alg.DataIndependentSpectralRadiusEstimation}.
  We first introduce a preliminary result on the sample complexity of~\eqref{eq.MultiTrajLeastSquare}.
\begin{lemma}[Theorem III.3 in~\cite{RN10762}]\label{lem.LeastSquareIdentification}
  Consider the least-squares estimator~\eqref{eq.MultiTrajLeastSquare}, 
  fix a failure probability $\delta \in(0,1),$ and assume that $N \geq 8(n+p)+16\log (4 / \delta)$.
  Then, it holds with probability at least $1-\delta$ that
  \begin{align}
    \|\hat{A}-A\|& \leq 16 \sigma_w \lambda_{\min }^{-1 / 2}\left(\Sigma\right) \sqrt{\frac{ (n+2p) \log (36 / \delta)}{N}} \nonumber \\ 
                     &:= f_3(N, \delta), \label{eq:f3Df}
  \end{align}
where $\Sigma:= \sum_{t=0}^{T} \sigma_u^2A^{t}BB^\top \left(A^{\top}\right)^{t}+ \sigma_w^2 A^{t}\left(A^{\top}\right)^{t}.$
\end{lemma}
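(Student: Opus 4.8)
The plan is to treat this as the standard finite-sample least-squares identification bound and to exploit the one structural simplification that the sampling scheme of Algorithm~\ref{alg.DataIndependentSpectralRadiusEstimation} buys us: because only the last input and last two states of each of the $N$ \emph{independent} trajectories are retained, the noise $w_T^{(i)}$ is independent of the regressor $z^{(i)}:=[(x_T^{(i)})^\top,(u_T^{(i)})^\top]^\top$, and the pairs $\{(z^{(i)},w_T^{(i)})\}_{i=1}^N$ are i.i.d. This sidesteps the correlated-noise difficulty of single-trajectory identification and is exactly the reason this algorithm admits a clean, data-independent rate. Starting from the normal equations defining~\eqref{eq.MultiTrajLeastSquare} and substituting $x_{T+1}^{(i)}=Ax_T^{(i)}+Bu_T^{(i)}+w_T^{(i)}$, I would first write the error as
\[
\begin{bmatrix}\hat{A}-A & \hat{B}-B\end{bmatrix}
=\Bigl(\sum_{i=1}^N w_T^{(i)}(z^{(i)})^\top\Bigr)
\Bigl(\sum_{i=1}^N z^{(i)}(z^{(i)})^\top\Bigr)^{-1},
\]
so that, writing $\Phi:=\sum_{i=1}^N z^{(i)}(z^{(i)})^\top$, the spectral norm of $\hat{A}-A$ is governed by a noise--regressor cross term times $\Phi^{-1}$, with the $A$-block extracted through $E=[I,0]$.

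Second, I would bound the two factors separately. Since $x_0=0$ and the inputs and noises are i.i.d.\ Gaussian, each $z^{(i)}$ is zero-mean Gaussian with covariance $\bar{\Sigma}:=\mathrm{blkdiag}(\Sigma,\sigma_u^2 I)$; the off-diagonal block vanishes because $x_T^{(i)}$ depends only on $u_{0:T-1}^{(i)},w_{0:T-1}^{(i)}$ and is thus independent of $u_T^{(i)}$, and the state block equals the matrix $\Sigma$ of the statement up to the time-index convention. For the cross term, conditioning on the regressors and using the independent Gaussian noise, a self-normalized tail bound controls $\sum_i w_T^{(i)}(z^{(i)})^\top\Phi^{-1/2}$ by a factor scaling with the problem dimensions and $\sqrt{\log(1/\delta)}$. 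For the denominator, a matrix-concentration argument shows that once $N\gtrsim (n+p)+\log(1/\delta)$ one has $\Phi\succeq cN\bar{\Sigma}$ with high probability for an absolute constant $c$; inverting and applying $E$ picks out the state block, giving $\lambda_{\max}(E\Phi^{-1}E^\top)\le 1/\bigl(cN\lambda_{\min}(\Sigma)\bigr)$. Combining the two and tracking constants reproduces $\|\hat{A}-A\|\le 16\sigma_w\lambda_{\min}^{-1/2}(\Sigma)\sqrt{(n+2p)\log(36/\delta)/N}$.

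A shorter route, using results already available in this paper, is to start from the data-dependent bound of Lemma~\ref{lem:DataDependentLST}, which gives $\|\hat{A}-A\|^2\le C(n,p,\delta)\,\lambda_{\max}(E\Phi^{-1}E^\top)$. It then only remains to replace the random quantity $\lambda_{\max}(E\Phi^{-1}E^\top)$ by a deterministic bound, and this is precisely where the Gram-matrix lower bound $\Phi\succeq cN\bar{\Sigma}$ enters, yielding $\lambda_{\max}(E\Phi^{-1}E^\top)\le 1/(cN\lambda_{\min}(\Sigma))$; substituting and simplifying $C(n,p,\delta)$ recovers the same $1/N$ rate up to absolute constants.

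In either route the crux, and the step I expect to be hardest, is the lower bound on the smallest eigenvalue of the empirical Gram matrix $\Phi$, i.e.\ establishing $\Phi\succeq cN\bar{\Sigma}$ with high probability. This is a concentration statement for a sum of $N$ i.i.d.\ rank-one Gaussian outer products and is exactly what forces the sample-size requirement $N\ge 8(n+p)+16\log(4/\delta)$: below this threshold $\Phi$ may be ill-conditioned or singular and the bound becomes vacuous. The remaining effort is bookkeeping -- computing $\bar{\Sigma}$ and verifying its state block matches $\Sigma$, splitting the failure probability across the numerator and denominator events by a union bound, and carrying the explicit constants $16$ and $36$ through the self-normalized tail bound.
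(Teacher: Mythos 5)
A preliminary but important point: the paper contains no proof of Lemma~\ref{lem.LeastSquareIdentification} at all --- it is imported verbatim, constants and all, from Theorem III.3 of~\cite{RN10762} and used as a black box. So your attempt can only be measured against the argument in that source, and on that score your outline is essentially the standard one used there: write the least-squares error from the normal equations as a noise--regressor cross term times $\Phi^{-1}$; exploit that, across the $N$ independent trajectories, the pairs $(z^{(i)},w_T^{(i)})$ are i.i.d.\ with $w_T^{(i)}$ independent of $z^{(i)}$ (the key simplification over single-trajectory identification, as you say); compute the block-diagonal Gaussian covariance of the regressor; bound the cross term conditionally on the regressors (after self-normalization it is an $n\times(n+p)$ matrix with i.i.d.\ Gaussian entries); lower-bound the Gram matrix, $\Phi\succeq cN\bar{\Sigma}$, which is exactly what forces the burn-in $N\geq 8(n+p)+16\log(4/\delta)$; and union-bound the two failure events. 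Your shorter route through Lemma~\ref{lem:DataDependentLST} is also sound, since the last-sample tuples are precisely the independent data that lemma requires; combined with the same Gram-matrix lower bound it yields the stated $1/\sqrt{N}$ rate with comparable (indeed somewhat smaller) dimensional factors, hence implies~\eqref{eq:f3Df}.

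Two caveats. First, the entire content of the lemma is its explicit numbers --- the $16$, the $36$, and the burn-in $8(n+p)+16\log(4/\delta)$ --- and your proposal asserts rather than derives them (``tracking constants reproduces\ldots''). Instantiating the absolute constant $c$ in $\Phi\succeq cN\bar{\Sigma}$, the Gaussian operator-norm bound for the cross term, and the split of $\delta$ across the two events is precisely the bookkeeping that produces those numbers; until it is carried out you have a proof of the rate, not of the lemma as stated. Second, your parenthetical about the time-index convention is a genuine catch, worth making explicit: with $x_0=0$ and the data as described in this paper, $\mathrm{Cov}\bigl(x_T^{(i)}\bigr)=\sum_{t=0}^{T-1}\bigl(\sigma_u^2A^tBB^\top (A^\top)^t+\sigma_w^2A^t(A^\top)^t\bigr)$, which has one term fewer than the $\Sigma$ displayed in the lemma; the argument establishes the bound with $\mathrm{Cov}(x_T)$ in place of $\Sigma$, so the paper's version appears to be an off-by-one transcription of the cited theorem, and a self-contained proof should state the covariance it actually uses.
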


The next theorem use Lemma~\ref{lem.LeastSquareIdentification} for providing a spectral radius estimation error bound for Algorithm~\ref{alg.DataIndependentSpectralRadiusEstimation}.
The proof is similar to that of Theorem~\ref{thm.SpectralDependentBounds} and is omitted here for brevity.
\begin{theorem}\label{thm.SpectralIndependentBounds}
  Consider Algorithm~\ref{alg.DataIndependentSpectralRadiusEstimation}, fix a failure probability $\delta \in(0,1),$ and assume that
 $N \geq 8(n+p)+16\log (4 / \delta)$.
  Then, it holds with probability at least $1-\delta,$ that
\begin{align}
  |\rho(A)-\rho(\hat{A})|& \le    {\left(2\|A\|+ f_3(N, \delta)\right)}^{1-\frac{1}{n}}{ \left(f_3(N,\delta)\right)}^{\frac{1}{n}}\nonumber \\
  &:= f_4(N, \delta), \label{eq.f4Def}
\end{align}
where $f_3(N,\delta)$ is defined in Lemma~\ref{lem.LeastSquareIdentification}.
\end{theorem}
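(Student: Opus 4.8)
The plan is to mirror the proof of Theorem~\ref{thm.SpectralDependentBounds}, the one essential difference being that the final bound must be expressed through the true system norm $\|A\|$ rather than through $\|\hat{A}\|$, so that the resulting expression depends only on system parameters and the number of trajectories $N$ and is therefore amenable to a sample-complexity reading. First I would apply the spectral variation bound of Lemma~\ref{lem.SpectralVariationBounds} to the pair $(A,\hat{A})$, exactly as in the derivation of~\eqref{eq.ImmediateErrorBounds}, to obtain the deterministic inequality
\begin{align*}
  |\rho(A)-\rho(\hat{A})| \le (\|A\|+\|\hat{A}\|)^{1-\frac{1}{n}}\|A-\hat{A}\|^{\frac{1}{n}}.
\end{align*}
This step uses no probabilistic assumption and is identical to the corresponding step for Algorithm~\ref{alg.DataDependentSpectralRadiusEstimation}.

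The second step is where the argument departs from that of Theorem~\ref{thm.SpectralDependentBounds}. Instead of bounding $\|A\|$ by $\|\hat{A}\|+\|A-\hat{A}\|$, I would reorient the triangle inequality and bound the estimated norm by the true one, writing $\|\hat{A}\|=\|A+\hat{A}-A\|\le\|A\|+\|\hat{A}-A\|$, so that $\|A\|+\|\hat{A}\|\le 2\|A\|+\|\hat{A}-A\|$. Substituting this into the previous inequality yields
\begin{align*}
  |\rho(A)-\rho(\hat{A})| \le (2\|A\|+\|A-\hat{A}\|)^{1-\frac{1}{n}}\|A-\hat{A}\|^{\frac{1}{n}}.
\end{align*}
This reorientation is the key design choice: it trades the computable but data-dependent quantity $\|\hat{A}\|$ for the system parameter $\|A\|$, which is precisely what makes the resulting bound data-independent.

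Finally I would invoke Lemma~\ref{lem.LeastSquareIdentification}, which guarantees $\|\hat{A}-A\|\le f_3(N,\delta)$ with probability at least $1-\delta$ whenever $N\ge 8(n+p)+16\log(4/\delta)$, and substitute this high-probability bound for $\|A-\hat{A}\|$ in both places. The only point requiring care, and the nearest thing to an obstacle in an otherwise routine argument, is justifying this substitution: it is legitimate because the map $x\mapsto (2\|A\|+x)^{1-\frac{1}{n}}x^{\frac{1}{n}}$ is non-decreasing on $[0,\infty)$, being a product of two non-negative non-decreasing factors, so replacing $\|A-\hat{A}\|$ by its upper bound $f_3(N,\delta)$ can only enlarge the right-hand side. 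Hence, on the event of probability at least $1-\delta$ on which Lemma~\ref{lem.LeastSquareIdentification} holds, the claimed bound $f_4(N,\delta)$ follows. I expect no genuine difficulty beyond getting the direction of the triangle inequality right so as to isolate $\|A\|$ and noting the monotonicity that validates the substitution.
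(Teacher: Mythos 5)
Your proposal is correct and follows exactly the route the paper intends: the paper omits this proof, stating it is analogous to that of Theorem~\ref{thm.SpectralDependentBounds}, and the necessary adaptation is precisely the one you identify — reorienting the triangle inequality to $\|\hat{A}\|\le\|A\|+\|\hat{A}-A\|$ so that $2\|A\|$ rather than $2\|\hat{A}\|$ appears, then substituting $f_3(N,\delta)$ via Lemma~\ref{lem.LeastSquareIdentification}. Your explicit monotonicity justification for that substitution is a point the paper glosses over but is needed and valid.
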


  Theorem~\ref{thm.SpectralIndependentBounds} explicitly shows how various system parameters affect the spectral radius estimation error.
Even though the error bound~\eqref{eq.f4Def} involves $\|A\|$ and $\Sigma$,  in practice, we only need to know an upper bound of $\|A\|$ and a lower bound of $\Sigma$ to use~\eqref{eq.f4Def}.
From Theorem~\ref{thm.SpectralIndependentBounds}, we can also derive the sample complexity bound for estimating the spectral radius using Algorithm~\ref{alg.DataIndependentSpectralRadiusEstimation}.
It is clear from Theorem~\ref{thm.SpectralIndependentBounds} that $f_4$ is an increasing function of $f_3$ and, if $N$ is sufficiently large, $f_3$ and $f_4$ can be made arbitrarily small, which means any desired estimation precision can be achieved if the number of trajectories $N$ is large enough.
However,  explicitly calculating the sample complexity bound of $N$ that achieves a given accuracy $\epsilon$ for general systems from~\eqref{eq.f4Def} is a difficult task.
In the following, we consider the case that  $\epsilon-2(1-\frac{1}{n})\|A\| >0$ and provide a sample complexity result for
Algorithm~\ref{alg.DataIndependentSpectralRadiusEstimation}.
\begin{corollary}\label{cor:SpectralRadiusEstimationSampleComplexity}
Consider Algorithm~\ref{alg.DataIndependentSpectralRadiusEstimation}, let  $b=\epsilon-2(1-\frac{1}{n})\|A\| $, for given $\epsilon, \delta$, if $b >0$ and  $N$ is greater than 
$\max \left\{ N_1,  8(n+p)+16\log \frac{4}{\delta}\right\}$,
  where $N_1=\frac{256 \sigma_w^2  (n+2p)}{b^2\lambda_{\min }(\Sigma)}\log\frac{36}{\delta}$, then with probability at least $1-\delta$, we have
  \begin{align}
  |\rho(A)-\rho(\hat{A})|\le \epsilon.\label{eq:2}
  \end{align}
\end{corollary}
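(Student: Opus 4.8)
The plan is to start from the high-probability bound furnished by Theorem~\ref{thm.SpectralIndependentBounds}. That theorem already guarantees $|\rho(A)-\rho(\hat{A})| \le f_4(N,\delta)$ with probability at least $1-\delta$ whenever $N \ge 8(n+p)+16\log(4/\delta)$, so it suffices to exhibit a lower bound on $N$ under which $f_4(N,\delta) \le \epsilon$; the corollary then follows on the very same probability-$(1-\delta)$ event. The difficulty is that $f_4$ in~\eqref{eq.f4Def} is a product of the factors $(2\|A\|+f_3)^{1-1/n}$ and $f_3^{1/n}$, in which the dependence on $N$ (entering only through $f_3$) is entangled with the constant $\|A\|$, so inverting the product directly for $N$ is awkward.

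The key step I would use to decouple these factors is the weighted arithmetic--geometric mean inequality. Reading $f_4 = f_3^{1/n}(2\|A\|+f_3)^{1-1/n}$ as a weighted geometric mean of $f_3$ and $2\|A\|+f_3$ with weights $1/n$ and $1-1/n$, I would bound
\begin{align*}
  f_4(N,\delta) \le \tfrac{1}{n} f_3 + \left(1-\tfrac{1}{n}\right)\left(2\|A\|+f_3\right) = 2\left(1-\tfrac{1}{n}\right)\|A\| + f_3(N,\delta).
\end{align*}
This replaces the nonlinear product by an affine expression in $f_3$ and shows at once that $f_4 \le \epsilon$ holds as soon as $f_3(N,\delta) \le \epsilon - 2(1-\tfrac{1}{n})\|A\| = b$, a quantity that is well defined and positive precisely under the standing assumption $b>0$.

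It then remains to translate $f_3(N,\delta)\le b$ into a condition on $N$. Using the explicit form of $f_3$ from~\eqref{eq:f3Df}, the inequality $16\sigma_w \lambda_{\min}^{-1/2}(\Sigma)\sqrt{(n+2p)\log(36/\delta)/N} \le b$ is equivalent, after squaring and rearranging, to $N \ge \frac{256\sigma_w^2(n+2p)}{b^2 \lambda_{\min}(\Sigma)}\log\frac{36}{\delta} = N_1$. Intersecting this with the sample-size requirement inherited from Theorem~\ref{thm.SpectralIndependentBounds} gives the stated threshold $N \ge \max\{N_1,\,8(n+p)+16\log(4/\delta)\}$, from which~\eqref{eq:2} follows. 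The only genuinely nontrivial ingredient is the weighted AM--GM bound that linearizes $f_4$; everything after it is routine algebraic inversion of the closed form of $f_3$.
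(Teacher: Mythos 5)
Your proposal is correct and is essentially the paper's own argument: the weighted AM--GM bound $f_3^{1/n}(2\|A\|+f_3)^{1-1/n}\le \frac{1}{n}f_3+(1-\frac{1}{n})(2\|A\|+f_3)=2(1-\frac{1}{n})\|A\|+f_3$ is exactly the paper's key step, which it obtains by applying Jensen's inequality to the logarithm and then exponentiating. Beyond this cosmetic difference (you state AM--GM multiplicatively, the paper works in log form), both proofs reduce the claim to $f_3(N,\delta)\le b$ and invert the closed form of $f_3$ to get $N_1$, so the two arguments coincide.
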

\begin{proof}
Since $b>0$ and $N\ge N_1$, we have that $f_3(N,\delta)\le b$.
Substituting the expression of $b$ and taking logarithm of both sides, we obtain
$ \log \left( 2(1-\frac{1}{n}) \|A\| +f_3 \right) \le \log \epsilon$.
From Jensen's inequality, we have
\begin{align*}
  &\log \left(  2(1-\frac{1}{n}) \|A\| +f_3 \right)\\
  &=\log \left(  (1-\frac{1}{n}) (2\|A\| +f_3) +\frac{1}{n} f_3 \right) \\
  & \ge   (1-\frac{1}{n}) \log \left( 2\|A\| +f_3 \right)+ \frac{1}{n} \log f_3.
\end{align*}
Therefore, we have
$  (1-\frac{1}{n}) \log \left( 2\|A\| +f_3 \right)+ \frac{1}{n} \log f_3 \le \log \epsilon$,
which means
$  f_4(N, \delta) = (2\|A\|+f_3)^{1-\frac{1}{n}} f_3^{\frac{1}{n}}  \le \epsilon$.
In view of Theorem~\ref{thm.SpectralIndependentBounds}, we know that~\eqref{eq:2} holds with probability at least $1-\delta$.
\end{proof}

It is clear from Corollary~\ref{cor:SpectralRadiusEstimationSampleComplexity} that, when $n$ is large,  $b\approx \epsilon -2 \|A\|$. Then the sample complexity bound $N_1$ scales linearly with the sum of state and input dimensions $n+2p$.

\section{Finite-Sample-Based Stabilizability Test for NCSs\label{sec.StabilityTest}}

Suppose we already know that with probability at least $1-\delta$ that the following error bound holds
\begin{align}
|\rho(\hat{A})-  \rho(A)|<\epsilon, \label{eq.temp4}
\end{align}
which can be obtained either from Algorithm~\ref{alg.DataDependentSpectralRadiusEstimation}/Theorem~\ref{thm.SpectralDependentBounds} or Algorithm~\ref{alg.DataIndependentSpectralRadiusEstimation}/Theorem~\ref{thm.SpectralIndependentBounds}.
In this section, we show that if the packet reception rate $q$ is unknown and a finite amount of channel samples are available, one can check from these data whether the stabilizability condition~\eqref{eq.StabilityCondition} holds.
The following lemma, which is a variant of Hoeffding's inequality, provides error bounds for estimating $q$ from finitely many samples.
\begin{lemma}[Theorem 4.5 of~\cite{RN10404}]\label{lem.qFiniteSample}
  Consider a sequence $\{\gamma_k, k=0, \ldots, N_q-1\}$ of i.i.d.\ random variables taking values in $\{0,1\}$ with mean $q$.
  Let $\hat{q}=\frac{1}{N_q}\sum_{k=0}^{N_q-1}\gamma_k$ be the sample average.
  Then for any $\delta_q\in (0,1)$, it holds that
  \begin{align*}
    \Pr \left(|q-\hat{q}|\le f_5(N_q, \delta_q) \right)\ge 1-\delta_q,
  \end{align*}
  where $f_5(N_q, \delta_q):=\sqrt{\frac{ \log (2/\delta_q)}{2N_q}}$.
\end{lemma}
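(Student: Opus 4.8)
The plan is to derive this statement directly from the classical two-sided Hoeffding inequality, since Lemma~\ref{lem.qFiniteSample} is simply a restatement of that bound specialized to Bernoulli samples. First I would recall that for independent random variables $\gamma_0, \ldots, \gamma_{N_q-1}$ satisfying $\gamma_k \in [0,1]$ almost surely, Hoeffding's inequality gives, for every $t > 0$,
\begin{align*}
\Pr\left( \left| \sum_{k=0}^{N_q-1} (\gamma_k - q) \right| \ge t \right) \le 2 \exp\left( -\frac{2 t^2}{N_q} \right),
\end{align*}
where I have used that each summand takes values in an interval of length $1$, so the sum of squared interval lengths equals $N_q$.

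Next I would rescale to the sample average $\hat q = \frac{1}{N_q}\sum_{k=0}^{N_q-1}\gamma_k$, whose mean is $q$. Setting $t = N_q \epsilon$ and dividing the deviation event through by $N_q$ yields
\begin{align*}
\Pr\left( |\hat q - q| \ge \epsilon \right) \le 2 \exp\left( -2 N_q \epsilon^2 \right).
\end{align*}
The final step is to calibrate $\epsilon$ so that the right-hand side equals the prescribed confidence level $\delta_q$: solving $2\exp(-2N_q\epsilon^2) = \delta_q$ for $\epsilon$ gives exactly $\epsilon = \sqrt{\log(2/\delta_q)/(2N_q)} = f_5(N_q, \delta_q)$. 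Substituting this value shows $\Pr(|\hat q - q| \ge f_5) \le \delta_q$, and passing to the complementary event, noting $\{|\hat q - q| > f_5\} \subseteq \{|\hat q - q| \ge f_5\}$, delivers the claimed bound $\Pr(|q - \hat q| \le f_5) \ge 1 - \delta_q$.

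If a fully self-contained argument were required rather than citing Hoeffding, I would reprove the tail bound via the Chernoff/moment-generating-function method: apply Markov's inequality to $\exp\bigl(s\sum_k(\gamma_k - q)\bigr)$, use independence to factor the moment generating function, invoke Hoeffding's lemma $\mathbb{E}[e^{s(\gamma_k - q)}] \le \exp(s^2/8)$ for a zero-mean variable supported on an interval of length $1$, optimize over $s > 0$, and take a union bound over the two one-sided deviations to recover the factor $2$. The only real subtlety anywhere in the argument is bookkeeping, namely keeping the normalization by $N_q$ consistent when moving between the sum and the average, together with the harmless conversion between strict and non-strict inequalities in the inversion step; there is no substantive obstacle, as the result is a direct specialization of a standard concentration inequality.
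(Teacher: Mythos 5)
Your proof is correct, and it matches the paper's treatment: the paper does not prove this lemma at all, but imports it verbatim as Theorem 4.5 of a cited reference, describing it precisely as ``a variant of Hoeffding's inequality.'' Your derivation --- applying the two-sided Hoeffding bound to the centered sum, rescaling by $N_q$, and inverting $2\exp(-2N_q\epsilon^2)=\delta_q$ to obtain $\epsilon=f_5(N_q,\delta_q)$ --- is exactly the standard argument behind that cited result, with the strict/non-strict inequality conversion handled properly.
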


Based on the above result, Algorithm~\ref{alg.NCSStabilityTest} is proposed to determine the stabilizability of a NCS.
\begin{algorithm}[htbp]
  \caption{Finite-Sample-Based  Stabilizability Test for NCSs}
  \label{alg.NCSStabilityTest}
  \begin{algorithmic}[1]
\Require  $\delta_q$, $\delta$, $\epsilon$, $\rho(\hat{A})$, collected data $\{\gamma_k\}$ for $k=0, \ldots, N_q-1$
\State calculate $\hat{q}=\frac{1}{N}\sum_{k=0}^{N_q-1}\gamma_k$ and $f_5(N_q, \delta_q)$
\If {$\hat{q}-f_5(N_q, \delta_q)<1$ and $\rho(\hat{A}) +\epsilon< 1/\sqrt{{1-\hat{q}+f_5(N_q, \delta_q)}}$}
\State return ``stabilizability condition~\eqref{eq.StabilityCondition} holds''
\ElsIf  {$\hat{q}+f_5(N_q, \delta_q)<1$ and $\rho(\hat{A}) -\epsilon> 1/\sqrt{{1-\hat{q}-f_5(N_q, \delta_q)}}$}
\State return ``stabilizability condition~\eqref{eq.StabilityCondition} does not hold''
\Else
\State return  ``undetermined''
\EndIf
\end{algorithmic}
\end{algorithm}

The performance analysis of Algorithm~\ref{alg.NCSStabilityTest} is given in the next theorem.
\begin{theorem}\label{thm.StabilityTestAnalysis}
If $N_q, \delta_q$ and $\epsilon$ satisfy
  \begin{gather}
    f_5(N_q, \delta_q)<\frac12(1-q),    \label{eq:NqStar1}\\
        f_5(N_q, \delta_q) < \frac12 |1-q-\frac{1}{\rho(A)^2}|,  \label{eq:NqStar2}\\
\epsilon\le
  \frac{1}{2}   \max\left\{  1/\sqrt{{1-q+2f_5(N_q, \delta_q)}}-\rho(A)\right.,\nonumber \\\quad \quad \quad\quad \quad \quad\quad \left. \rho(A) -1/\sqrt{{1-q-2f_5(N_q, \delta_q)}} \right\},   \label{eq.Nstar2} 
\end{gather}
 then Algorithm~\ref{alg.NCSStabilityTest} returns the correct answer\footnote{``returns the correct answer'' means: if \eqref{eq.StabilityCondition} holds, Algorithm~\ref{alg.NCSStabilityTest} returns ``stabilizability condition~\eqref{eq.StabilityCondition} holds"; if \eqref{eq.StabilityCondition} does not hold, Algorithm~\ref{alg.NCSStabilityTest} returns ``stabilizability condition~\eqref{eq.StabilityCondition} does not hold". } with probability at least
$(1-\delta)(1-\delta_q)$.  
\end{theorem}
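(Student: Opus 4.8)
The plan is to reduce the probabilistic statement to a deterministic one by conditioning on a single ``good event'' on which both estimates are simultaneously accurate, and then to verify that on this event the hard-coded branching of Algorithm~\ref{alg.NCSStabilityTest} always selects the branch matching the true status of \eqref{eq.StabilityCondition}. First I would introduce the event $\mathcal{E}$ on which
\[ |\rho(\hat A)-\rho(A)|<\epsilon \quad\text{and}\quad |q-\hat q|\le f_5(N_q,\delta_q) \]
both hold; the first holds with probability at least $1-\delta$ by \eqref{eq.temp4} and the second with probability at least $1-\delta_q$ by Lemma~\ref{lem.qFiniteSample}. Since the input/state samples feeding $\rho(\hat A)$ and the packet-drop samples feeding $\hat q$ are collected independently, I would conclude $\Pr(\mathcal{E})\ge(1-\delta)(1-\delta_q)$, and carry out the rest of the argument deterministically on $\mathcal{E}$. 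A preliminary reformulation I would use throughout is that, since $q\in(0,1)$, condition \eqref{eq.StabilityCondition} is equivalent to $\rho(A)<\sqrt{\tfrac{1}{1-q}}$ and (recalling the standing assumption $q\neq 1-\tfrac{1}{\rho(A)^2}$) its failure is equivalent to $\rho(A)>\sqrt{\tfrac{1}{1-q}}$.

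Next I would split into two cases. Suppose \eqref{eq.StabilityCondition} holds, so $\rho(A)<\sqrt{\tfrac{1}{1-q}}$; the goal is to show the first branch fires. Writing $f_5=f_5(N_q,\delta_q)$, on $\mathcal{E}$ the guard $\hat q-f_5\le q<1$ is immediate, and the two monotone estimates $\rho(\hat A)+\epsilon<\rho(A)+2\epsilon$ and $\sqrt{\tfrac{1}{1-\hat q+f_5}}\ge\sqrt{\tfrac{1}{1-q+2f_5}}$ hold, the latter obtained by replacing $\hat q$ by its smallest admissible value $q-f_5$ and using that $x\mapsto\sqrt{1/(1-x)}$ is increasing. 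Chaining these, the branch inequality follows from $\rho(A)+2\epsilon\le\sqrt{\tfrac{1}{1-q+2f_5}}$, i.e.\ from $\epsilon\le\tfrac12\big(\sqrt{\tfrac{1}{1-q+2f_5}}-\rho(A)\big)$, which is precisely the first term inside the maximum of \eqref{eq.Nstar2}. To close the case I would observe that here the second term is negative: $\rho(A)<\sqrt{\tfrac{1}{1-q}}<\sqrt{\tfrac{1}{1-q-2f_5}}$ (the last root being well defined thanks to \eqref{eq:NqStar1}), so the maximum in \eqref{eq.Nstar2} equals its first term, while \eqref{eq:NqStar2}, rewritten as $2f_5<\tfrac{1}{\rho(A)^2}-(1-q)$, makes that first term strictly positive.

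The failure case is symmetric, targeting the second (ElseIf) branch. When $\rho(A)>\sqrt{\tfrac{1}{1-q}}$, condition \eqref{eq:NqStar1} yields the guard $\hat q+f_5\le q+2f_5<1$, and the estimates $\rho(\hat A)-\epsilon>\rho(A)-2\epsilon$ together with $\sqrt{\tfrac{1}{1-\hat q-f_5}}\le\sqrt{\tfrac{1}{1-q-2f_5}}$ reduce the branch inequality to $\epsilon\le\tfrac12\big(\rho(A)-\sqrt{\tfrac{1}{1-q-2f_5}}\big)$, the second term of \eqref{eq.Nstar2}; now the first term is negative and \eqref{eq:NqStar2} makes the second positive, so \eqref{eq.Nstar2} again degrades to exactly the inequality needed. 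Because the first branch is tested before the ElseIf, the one extra check is that it does not fire by mistake, which I would get from the chain $\rho(\hat A)+\epsilon>\rho(\hat A)-\epsilon>\sqrt{\tfrac{1}{1-\hat q-f_5}}\ge\sqrt{\tfrac{1}{1-\hat q+f_5}}$, violating the first branch inequality whenever the ElseIf inequality holds.

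I expect the main obstacle to be bookkeeping rather than any deep estimate: the algorithm's thresholds are written through the estimate $\hat q$, whereas \eqref{eq:NqStar1}--\eqref{eq.Nstar2} are stated through the true $q$, so every comparison must replace $\hat q$ by the worst admissible endpoint $q\pm f_5$ in the direction fixed by the monotonicity of $\sqrt{1/(1-\cdot)}$, and one must confirm that the two-sided condition \eqref{eq.Nstar2} with its maximum collapses to the correct one-sided bound in each case by tracking the sign of the irrelevant term. Verifying well-posedness of each square root through the guards furnished by \eqref{eq:NqStar1} and \eqref{eq:NqStar2} is the only other delicate point; no machinery beyond the two cited concentration results and this monotonicity is needed.
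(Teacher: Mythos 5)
Your proposal is correct and takes essentially the same route as the paper's proof: both arguments rest on the two concentration results (the spectral-radius bound~\eqref{eq.temp4} and Lemma~\ref{lem.qFiniteSample}), the independence of the input/state data and the channel data, the monotonicity of $x \mapsto \sqrt{1/(1-x)}$, and the same case split with the same sign analysis showing that the maximum in~\eqref{eq.Nstar2} collapses to the relevant one-sided bound. Your ``good event'' formulation is just a cleaner packaging of the paper's chained probability inequalities, and your explicit verification that the first branch cannot fire in the failure case (so the \textbf{ElseIf} is actually reached) is a small point of care that the paper's proof leaves implicit.
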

\begin{proof}We first prove the result when~\eqref{eq.StabilityCondition} holds, i.e., $\rho(A)<\sqrt{\frac{1}{1-q}}$.
From~\eqref{eq:NqStar1}, we know that
$   1/ \sqrt{{1-q+2f_5(N_q, \delta_q)}}>0$.
Moreover, from~\eqref{eq:NqStar2} and~\eqref{eq.Nstar2}, we have
  \begin{align}
    2 \epsilon \le  1/\sqrt{{1-q+2f_5(N_q, \delta_q)}}-\rho(A) \label{eq.EpsilonBound}.
  \end{align}
The probability that the algorithm returns the correct answer  ``stabilizability condition~\eqref{eq.StabilityCondition} holds'' is
  \begin{align*}
    &\Pr\left(\rho(\hat{A}) +\epsilon< 1/\sqrt{{1-\hat{q}+f_5}}\right)\\
    &\overset{(a)}{\ge}  \Pr\left(1/\sqrt{{1-\hat{q}+f_5}}> 1/\sqrt{{1-q+2f_5}}\right) \\
    & \quad \times \Pr \left(  1/\sqrt{{1-q+2f_5}}>\rho(\hat{A}) +\epsilon\right)\\
    &\ge  \Pr(q<\hat{q}+f_5<q+2f_5)\\
    &\quad \times \Pr \left(  1/\sqrt{{1-q+2f_5}}-\rho(A)>\rho(\hat{A})-\rho(A) +\epsilon \right)\\
    &\overset{(b)}{\ge} (1-\delta_q)\\
    &\quad \times\Pr \left(1/\sqrt{{1-q+2f_5}}-\rho(A)>\rho(\hat{A})-\rho(A) +\epsilon\right)\\
    & = (1-\delta_q) \times \\
    &  \left(1-\Pr\left(\rho(\hat{A})-\rho(A)\ge 1/\sqrt{{1-q+2f_5}}-\rho(A)-\epsilon \right)\right)\\
    & \overset{(c)}{\ge} (1-\delta_q) \left(1-\Pr\left(\rho(\hat{A})-\rho(A)\ge \epsilon \right)\right)\\
& \ge (1-\delta_q)(1-\delta),
  \end{align*}
  where $(a)$ follows from the fact that  for any scalar random variables $y, z$ and a given constant $\alpha$, the following probability relation holds
$
\Pr(z<y)
  =\Pr(z<y<\alpha)+\Pr(z<\alpha<y)+\Pr(\alpha<z<y)
  \ge \Pr(z<\alpha<y)$;
  $(b)$ follows from Lemma~\ref{lem.qFiniteSample} and $(c)$ follows from~\eqref{eq.EpsilonBound}.
  
  Next we prove the result when~\eqref{eq.StabilityCondition} does not hold, i.e., $\rho(A)>1/\sqrt{{1-q}}$.
  The proof is similar to the case that $\rho(A)<1/\sqrt{{1-q}}$ and we only outline the main steps.
The probability that the algorithm returns the correct answer ``stabilizability condition~\eqref{eq.StabilityCondition} does not hold'' is
  \begin{align*}
    &\Pr\left(\rho(\hat{A}) - \epsilon> 1/\sqrt{{1-\hat{q}-f_5}}\right)\\
    &\ge  \Pr\left(  1/\sqrt{{1-q-2f_5}} >1/\sqrt{{1-\hat{q}-f_5}}\right) \\
    & \quad \times \Pr \left(\rho(\hat{A}) -\epsilon>  1/\sqrt{{1-q-2f_5}}\right)\\
    &\ge  \Pr(q+2f_5>\hat{q}+f_5>q)\\
    &\quad \times  \Pr \left(\rho(\hat{A}) -\rho(A) >  1/\sqrt{{1-q-2f_5}}-\rho(A)+\epsilon\right)\\
    &\ge (1-\delta_q)\\
    &\quad \times\Pr \left(\rho(\hat{A}) -\rho(A) >  1/\sqrt{{1-q-2f_5}}-\rho(A)+\epsilon\right)\\
    & \ge (1-\delta_q)\\
    &\quad \times \left(1-\Pr\left(\rho(\hat{A})-\rho(A)\ge 1/\sqrt{{1-q-2f_5}}-\rho(A)+\epsilon \right)\right)\\
    & \ge (1-\delta_q)\left(1-\Pr\left(\rho(\hat{A})-\rho(A)\ge \epsilon \right)\right)
 \ge (1-\delta_q)(1-\delta).
  \end{align*}
The proof is complete.
\end{proof}

Theorem~\ref{thm.StabilityTestAnalysis} characterizes conditions on samples such that Algorithm~\ref{alg.NCSStabilityTest} can return a correct answer with high probability. 
Besides, the result in Theorem~\ref{thm.StabilityTestAnalysis} analytically shows how various system parameters affect the performance of Algorithm~\ref{alg.NCSStabilityTest}.
The right-hand sides of~\eqref{eq:NqStar2} and~\eqref{eq.Nstar2} can be interpreted as stabilizability margins, and imply that if the stabilizability margin is small, one should require small estimation errors for $\rho(A)$ and $q$.
  Moreover, the right-hand side of~\eqref{eq.Nstar2} is a decreasing function of $f_5$. This means that if $q$ is estimated with high accuracy, the requirement for a small $\epsilon$ can be relaxed, which represents a trade-off between the estimation accuracy of $q$ and $\rho(A)$.
  Even though applying Theorem~\ref{thm.StabilityTestAnalysis} requires to know $q$ and $\rho(A)$, in practice, if we have a prior knowledge of the stabilizability margin $|1-q-\frac{1}{\rho(A)^2}|$, we can collect enough samples to make $f_5$ and $\epsilon$ sufficiently small and therefore enforcing~\eqref{eq:NqStar1}, \eqref{eq:NqStar2}, and \eqref{eq.Nstar2}  to  use Theorem~\ref{thm.StabilityTestAnalysis}.

Moreover, if we use Algorithm~\ref{alg.DataIndependentSpectralRadiusEstimation} to estimate the spectral radius, we have the following sample complexity results from Theorem~\ref{thm.StabilityTestAnalysis} and Corollary~\ref{cor:SpectralRadiusEstimationSampleComplexity}.
\begin{corollary}\label{cor.StabilizabilityTestSampleComplexity}
Consider Algorithm~\ref{alg.DataIndependentSpectralRadiusEstimation} and Algorithm~\ref{alg.NCSStabilityTest}, for given $\delta$ and $\delta_q$. Assume that $N_q$ is selected to be greater than
  \begin{align}
    \max\{ 2| 1-q-\frac{1}{\rho(A)^2}|^{-2} \log \frac{2}{\delta_q},  2 (1-q)^{-2} \log \frac{2}{\delta_q} \}, \label{eq.LowerBoundNq}
  \end{align}
  and define
  \begin{align*}
\epsilon&=
                \frac{1}{2}   \max\left\{  1/\sqrt{{1-q+2f_5(N_q, \delta_q)}}-\rho(A)\right.,\\
    & \qquad \qquad \qquad \qquad  \left. \rho(A) -1/\sqrt{{1-q-2f_5(N_q, \delta_q)}} \right\},\\
    b&=\epsilon-2(1-\frac{1}{n})\|A\|.
\end{align*}
If, further,  $b>0$ and  $N$ is larger than 
  \begin{align}
\max \left\{ \frac{256 \sigma_w^2  (n+2p)}{b^2\lambda_{\min }(\Sigma)}\log\frac{36}{\delta},  \quad 8(n+p)+16\log (\frac{4}{\delta})\right\}, \label{eq.NLowerBounds}
  \end{align}
Algorithm~\ref{alg.NCSStabilityTest} returns the correct answer with probability at least
$(1-\delta)(1-\delta_q)$.
\end{corollary}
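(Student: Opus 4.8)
The plan is to chain Corollary~\ref{cor:SpectralRadiusEstimationSampleComplexity} and Theorem~\ref{thm.StabilityTestAnalysis} together, verifying that the explicit sample bounds stated here imply the more abstract hypotheses of those two results. First I would show that the lower bound \eqref{eq.LowerBoundNq} on $N_q$ is precisely what is needed to enforce conditions \eqref{eq:NqStar1} and \eqref{eq:NqStar2}. Recalling that $f_5(N_q,\delta_q)=\sqrt{\log(2/\delta_q)/(2N_q)}$, the inequality $f_5(N_q,\delta_q)<\tfrac12(1-q)$ is, after squaring, equivalent to $N_q>2(1-q)^{-2}\log(2/\delta_q)$, and likewise $f_5(N_q,\delta_q)<\tfrac12|1-q-\rho(A)^{-2}|$ is equivalent to $N_q>2|1-q-\rho(A)^{-2}|^{-2}\log(2/\delta_q)$. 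Taking the maximum of these two thresholds recovers exactly \eqref{eq.LowerBoundNq}, so the stated choice of $N_q$ guarantees both \eqref{eq:NqStar1} and \eqref{eq:NqStar2}.

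Next I would observe that $\epsilon$ is defined here to equal the right-hand side of \eqref{eq.Nstar2}, so condition \eqref{eq.Nstar2} holds with equality, and hence all three hypotheses of Theorem~\ref{thm.StabilityTestAnalysis} are in force. It then remains only to supply the spectral-radius error guarantee \eqref{eq.temp4}, namely that $|\rho(\hat A)-\rho(A)|\le\epsilon$ with probability at least $1-\delta$. This is precisely the conclusion of Corollary~\ref{cor:SpectralRadiusEstimationSampleComplexity}: since $b=\epsilon-2(1-\tfrac1n)\|A\|>0$ by assumption, requiring $N$ to exceed the maximum in \eqref{eq.NLowerBounds}---whose first entry is exactly the $N_1$ of that corollary and whose second is the minimal trajectory count of Lemma~\ref{lem.LeastSquareIdentification}---yields $|\rho(\hat A)-\rho(A)|\le\epsilon$ with probability at least $1-\delta$. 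Feeding this into Theorem~\ref{thm.StabilityTestAnalysis} then gives the correct answer with probability at least $(1-\delta)(1-\delta_q)$.

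I expect the only delicate points to be bookkeeping rather than conceptual. One is to keep the two sources of randomness separate: the packet-drop samples $\{\gamma_k\}$ used to estimate $q$ and the input--state data used to estimate $\rho(A)$ must be independent so that the failure probabilities combine multiplicatively into $(1-\delta)(1-\delta_q)$. This independence is already built into the product bound of Theorem~\ref{thm.StabilityTestAnalysis}, so I would simply note that it is inherited. A second is the strict-versus-nonstrict mismatch between the $\le\epsilon$ furnished by Corollary~\ref{cor:SpectralRadiusEstimationSampleComplexity} and the form of \eqref{eq.temp4}; since the proof of Theorem~\ref{thm.StabilityTestAnalysis} only ever bounds the complementary event $\{\rho(\hat A)-\rho(A)\ge\epsilon\}$, the guarantee $|\rho(\hat A)-\rho(A)|\le\epsilon$ already suffices and no boundary case arises. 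Beyond these, the argument is a direct substitution of the explicit thresholds into the hypotheses of the two earlier results, so there is no substantive obstacle.
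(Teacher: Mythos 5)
Your proposal is correct and follows essentially the same route as the paper's own (much terser) proof: verify \eqref{eq:NqStar1} and \eqref{eq:NqStar2} from the lower bound \eqref{eq.LowerBoundNq} on $N_q$, note that \eqref{eq.Nstar2} holds by the very definition of $\epsilon$, invoke Corollary~\ref{cor:SpectralRadiusEstimationSampleComplexity} (whose $N_1$ and trajectory-count requirements are exactly the two entries of \eqref{eq.NLowerBounds}) to supply \eqref{eq.temp4}, and then apply Theorem~\ref{thm.StabilityTestAnalysis}. The strict-versus-nonstrict mismatch you flag between the $\le\epsilon$ of Corollary~\ref{cor:SpectralRadiusEstimationSampleComplexity} and the $<\epsilon$ in \eqref{eq.temp4} is glossed over by the paper as well, so your treatment is, if anything, more careful than the original.
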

\begin{proof}
  If $N_q$ is greater than~\eqref{eq.LowerBoundNq}, we can show that~\eqref{eq:NqStar1} and~\eqref{eq:NqStar2} hold.
  If $N$ is larger than~\eqref{eq.NLowerBounds}, we can follow Corollary~\ref{cor:SpectralRadiusEstimationSampleComplexity} to show that~\eqref{eq.Nstar2} and~\eqref{eq.temp4} hold.
  Therefore, from Theorem~\ref{thm.StabilityTestAnalysis}, we can prove the result.
\end{proof}

  From Corollary~\ref{cor.StabilizabilityTestSampleComplexity}, we can observe that the sample complexity bounds~\eqref{eq.LowerBoundNq} and \eqref{eq.NLowerBounds} are inversely proportional to the square of the stabilizability margin.

\begin{remark}
  In specific stabilizability criteria for NCSs, we also need to check whether the absolute value of the determinant of the system state matrix is smaller than a threshold, see~\cite{RN373, RN1566}.
  We can utilize the determinant perturbation bounds in~\cite{RN11430} and derive algorithms similar to those proposed in this paper to determine these kinds of stabilizability conditions based on finitely many samples.
\end{remark}

\section{Numerical Examples}\label{sec.Simulation}

In this section, we illustrate the performance of the proposed algorithms through numerical examples.
Suppose the system parameters are given by
\begin{align*}
  A=
  \begin{bmatrix}
    1.2 & 0.1 \\
    0 &1
  \end{bmatrix},
        B=
        \begin{bmatrix}
          0\\
          1
        \end{bmatrix}.
\end{align*}
Moreover, the standard deviation for the process noise is $\sigma_w=0.1$.
The system is initialized at $x_0=0$.
The control input is generated from i.i.d.\ distribution $\mathcal{N}(0, \sigma_u^2I)$ with $\sigma_u=0.1$.

First, we evaluate the spectral radius estimation error bounds in Theorem~\ref{thm.SpectralDependentBounds} and Theorem~\ref{thm.SpectralIndependentBounds}.
We consider the data generation procedure described at the beginning of Section~\ref{sec.DataIndependentSpectralEstimate}, and
 select $T=5$, $\delta=0.1$, and values of $N$ from   $100 $  to $1000$.
For each given $N$, we independently generate $100$ data sets, run Algorithm~\ref{alg.DataDependentSpectralRadiusEstimation} and Algorithm~\ref{alg.DataIndependentSpectralRadiusEstimation} and obtain $\rho(\hat{A})$.
Algorithm~\ref{alg.DataDependentSpectralRadiusEstimation} uses all the data points from $N$ trajectories to generate the estimate.
While Algorithm~\ref{alg.DataIndependentSpectralRadiusEstimation} only uses the last input and the last two state measurements from each trajectory to generate the estimate.
Fig.~\ref{fig.theorem1and2} shows the estimation error  $|\rho(A)-\rho(\hat{A})|$ and the error bounds from Theorem~\ref{thm.SpectralDependentBounds} and Theorem~\ref{thm.SpectralIndependentBounds}, respectively.
We can observe from the figures that the data-dependent bounds are tighter as expected.
Moreover, the error  $|\rho(A)-\rho(\hat{A})|$ obtained using Algorithm~\ref{alg.DataDependentSpectralRadiusEstimation} and Algorithm~\ref{alg.DataIndependentSpectralRadiusEstimation} are very similar, showing that Algorithm~\ref{alg.DataIndependentSpectralRadiusEstimation} can be data efficient.
\begin{figure}[htbp]
  \centering
  \includegraphics[width=0.4\textwidth]{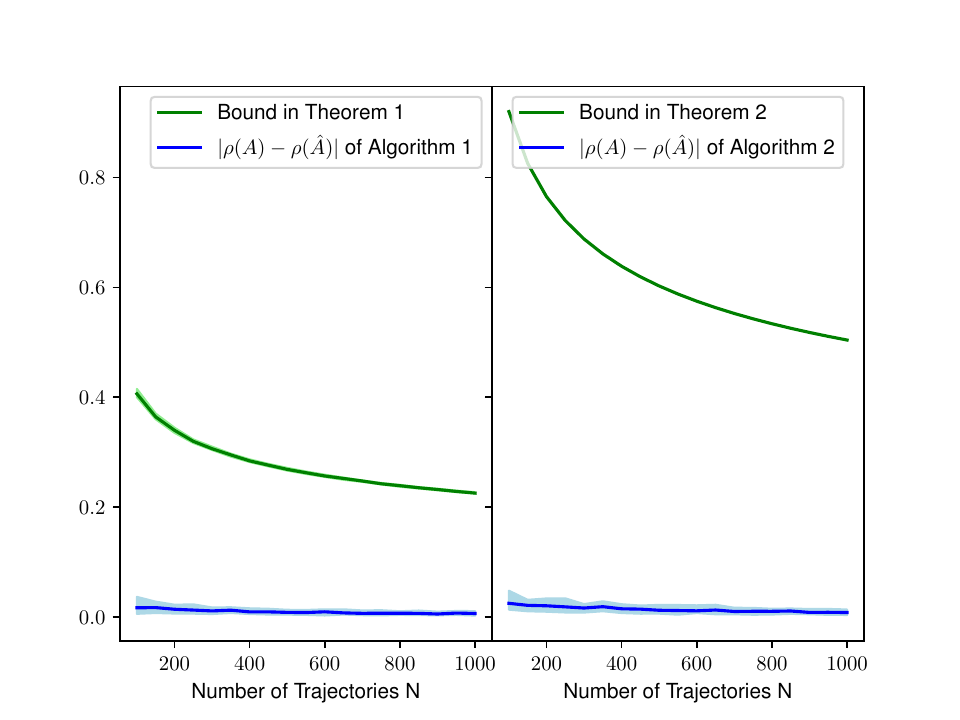}
  \caption{Data dependent and independent bounds of estimates (shown are median, first and third quartiles over 100 independent runs)}
  \label{fig.theorem1and2}
\end{figure}


Next, we evaluate the results in Theorem~\ref{thm.StabilityTestAnalysis} about performance guarantees of Algorithm~\ref{alg.NCSStabilityTest}.
Suppose we already have a high probability estimate of system spectral radius as in~\eqref{eq.temp4} with $\rho(\hat{A})=1.15$, $\epsilon=0.1$ and $\delta=0.01$. 
We select $T=5$, $\delta_q=0.01$ and assume $q=0.75$.
Since $\rho(A)<\sqrt{1/(1-q)}$, we know that the underlying system is stabilizable.
Furthermore, from Theorem~\ref{thm.StabilityTestAnalysis} we can calculate the probability that Algorithm~\ref{alg.NCSStabilityTest} returns the correct answer for any given $N_q$, which we call the \textit{theoretical successful prediction probability} (TSPP). 
Note that if~\eqref{eq:NqStar1}, \eqref{eq:NqStar2} and \eqref{eq.Nstar2} are not satisfied, we set the TSPP to zero.
In parallel, by simulations, we compute the \textit{empirical successful prediction rate} (ESPR) of Algorithm~\ref{alg.NCSStabilityTest}.
We consider values of $N_q$ from  $3$  to $1000$.
For each given $N_q$, we independently generate $400$ data sets, run Algorithm~\ref{alg.NCSStabilityTest} and calculate the ESPR.
These values are shown in Fig.~\ref{fig.NCSStabilityTest}, along with the TSPP.
We can observe that the ESPR quickly becomes $1$ as $N_q$ increases, which demonstrates the effectiveness of Algorithm~\ref{alg.NCSStabilityTest} in checking the stabilizability condition.
Moreover, the TSPP is smaller than the ESPR for all $N_q$, which shows that the results in Theorem~\ref{thm.StabilityTestAnalysis} are valid.

\begin{figure}[htbp]
  \centering
  \includegraphics[width=0.4\textwidth]{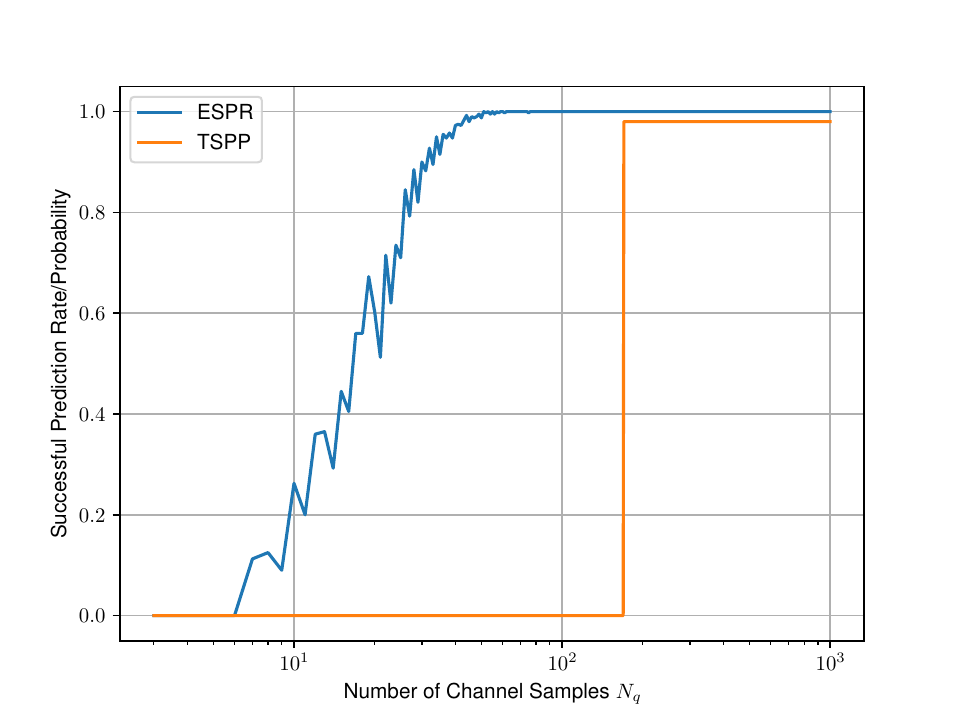}
  \caption{TSPP in Theorem~\ref{thm.StabilityTestAnalysis} and ESPR of Algorithm~\ref{alg.NCSStabilityTest}}
  \label{fig.NCSStabilityTest}
\end{figure}

\section{Conclusion\label{sec.Conclusion}}
We study how to reliably estimate the spectral radius from input and state measurements of a linear time-invariant system.
The derived results rely on existing works of finite sample analysis of least-squares and matrix eigenvalue perturbation techniques.
We further show how to use the proposed methods to determine the stabilizability conditions of NCSs.
Future work will be devoted to directly estimating the spectral radius from the collected data without the need to reconstruct system matrices first and using the proposed methods for data-driven control design for networked systems with performance guarantees.

\bibliographystyle{IEEEtran}
\bibliography{references}
\end{document}